\newtheorem{Thm}{Theorem}
\newtheorem{Prop}{Proposition}
\newtheorem{remark}{Remark}
\newcommand{\rmv}[1]{}
\begin{document}
\title{The multisubset sum problem for finite abelian groups}
\author[Muratovi\'{c}-Ribi\'{c}]{Amela Muratovi\'{c}-Ribi\'{c}}
\address{University of Sarajevo, Department of Mathematics, Zmaja od Bosne 33-35, 71000 Sarajevo, Bosnia and Herzegovina}
\email{amela@pmf.unsa.ba}

\author[Wang]{Qiang Wang}
\thanks{Research is partially supported by NSERC of Canada.}
\address{School of Mathematics and Statistics,
Carleton University, 1125 Colonel By Drive,  Ottawa, Ontario, $K1S$
$5B6$, CANADA}

\email{wang@math.carleton.ca}

\keywords{\noindent subset sum, finite albelian groups,  composition, partition, character,
finite fields, polynomials}

\subjclass[2000]{11B30, 05A15, 20K01, 11T06}

\begin{abstract}
In this note, we give the explicit formula for the number of
multisubsets of a finite abelian group $G$  with any given size such
that the sum is equal to a given element $g\in G$. This also gives
the number of partitions of $g$ into a given number of parts over a
finite abelian group. An inclusion-exclusion formula for the number
of multisubsets of a subset of $G$  with a given size and a given
sum is also obtained.
\end{abstract}

\maketitle

Let $G$ be a finite abelian group of size $n$ and $D$ be a subset of
$G$.  The well known subset sum problem in combinatorics is to
decide whether there exists a subset $S$ of $D$ which sums to a
given element in $G$. This problem is an important problem in
complexity theory and cryptography and it is NP-complete. For any
$g\in G$ and $i$ a positive integer, we let the number of subsets
$S$  of $D$ of size $i$ which sum up to $g$ be denoted by
\[
N(D, i, g) = \# \{ S \subseteq D : \# S = i, \sum_{s\in S} s = g \}.
\]

The explicit formula of $N(D, i, g)$ in general is a even harder
problem. However, when $D$ has more structure, Li and Wan made some
important progress in obtaining the explicit formula using a sieving
technique \cite{LiWan:08, LiWan:12}. Recently Kosters
\cite{Kosters:13} gives a shorter proof of the explicit formula
using character theory. Namely,

\[
N(G, i, g) = \frac{1}{n} \sum_{s \mid gcd(i, exp(G))} (-1)^{i+i/s}
\binom{n/s}{i/s} \sum_{d  \mid \gcd(e(g), s)} \mu(s/d) \# G[d],
\]

where $exp(G)$ is the exponent of $G$, $e(g) = \max \{ d : d \mid
exp(G), g \in dG \}$, $\mu$ is the M\"{o}bius function, and $G[d] =
\{h \in G: dh =0\}$ is the $d$-torsion of $G$.

Similarly, we denote
\[
M(D, i, g) =  \# \{ M : M ~ is~ a ~ multisubset ~of~ D,  \# M = i,
\sum_{s \in M} s =g \}.
\]
It is an interesting question by its own to count $M(D, i, g)$, the
number of multisubsets of $D$ which sum up to $g$. Indeed, this
problem is also equivalent to count the number of partitions with at
most $i$ parts over $D$. To avoid confusion we denote by
$\{\{a_1,\ldots ,a_n\}\}$ multisets, i.e. with possibly repeated
elements, and by $\{a_1,\ldots ,a_n\}$ the usual sets. We define a
partition of the element $g\in G$ with $i$ parts in $D$ as a
multiset $\{\{a_1,a_2,\ldots ,a_i\}\}$ such that all $a_k$'s are
nonzero elements in $D$
 and  $$a_1+a_2+\ldots +a_i=g.$$
Then the number of these partitions is denoted by $P_D(i,g)$, i.e.,
$$P_D(i,g)=\Big |\Big  \{\{\{a_1,a_2,\ldots ,a_i\}\}\subseteq D:a_1+a_2+\ldots
+a_i=g,  a_1, \ldots, a_i \neq 0 \Big \}\Big |.$$ It turns out $M(D,
i, g) = \sum_{k=0}^{i} P_D(k, g)$  is the number of partitions of
$g\in G$ with at most $i$ parts in $D$.  Another motivation to study
the enumeration of multisubset sums is due to a recent study of a
conjecture on polynomials of prescribed ranges over a finite field
\cite{GHNP:10,MW:12}.  Let $\mathbb{F}_q$ be a finite field of $q$
elements and $\mathbb{F}_q^*$ be the cyclic multiplicative group.
When $D = \mathbb{F}_q$ (the additive group) or $\mathbb{F}_q^{*}$,
counting the multisubset sum problem is the same as counting
partitions over finite fields \cite{MW:13}.

In this note, we use the same technique as in \cite{Kosters:13} to
obtain $M(D, i, g)$ when $D = G$.

\begin{Thm}
Let $G$ be a finite abelian group of size $n$ and let $g\in G$,
$i\in Z$ with $i \geq 0$. Then we have
\[
M(G, i, g) = \frac{1}{n} \sum_{s \mid \gcd(exp(G), i)}
\binom{n/s+i/s-1}{i/s} \sum_{ d \mid gcd(s, e(g))} \mu(s/d) \# G[d],
\]
where $exp(G)$ is the exponent of $G$, $e(g) = \max \{ d : d \mid
exp(G), g \in dG \}$, $\mu$ is the M\"{o}bius function, and $G[d] =
\{h \in G: dh =0\}$ is the $d$-torsion of $G$.
\end{Thm}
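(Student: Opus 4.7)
The plan is to adapt Kosters' character-theoretic proof of the subset-sum formula to the multisubset case; the only real change is that the product $\prod_{h\in G}(1+x\chi(h))$ used for subsets is replaced by the rational function $\prod_{h\in G}(1-x\chi(h))^{-1}$ used for multisubsets. For each $\chi\in\widehat{G}$, expanding this as a power series in $x$ enumerates multisubsets $M$ of $G$ with weight $\chi(\sum_{m\in M}m)\,x^{|M|}$, so character orthogonality immediately yields
$$
M(G,i,g) \;=\; \frac{1}{n}\sum_{\chi\in\widehat{G}}\overline{\chi(g)}\,[x^i]\!\prod_{h\in G}\frac{1}{1-x\chi(h)}.
$$

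Next I would simplify the inner product. Writing $s$ for the order of $\chi$, so $s\mid\exp(G)$, the character descends to a faithful character of the cyclic quotient $G/\ker\chi$ of order $s$, and as $h$ ranges over $G$ the values $\chi(h)$ hit each $s$-th root of unity exactly $n/s$ times. Thus
$$
\prod_{h\in G}\bigl(1-x\chi(h)\bigr) \;=\; \bigl(1-x^s\bigr)^{n/s},
$$
and the negative binomial theorem gives $[x^i](1-x^s)^{-n/s}=\binom{n/s+i/s-1}{i/s}$ when $s\mid i$ and $0$ otherwise. Grouping characters by order $s$, now constrained to divide $\gcd(\exp(G),i)$, reduces the problem to evaluating the character sum $c_s(g):=\sum_{\mathrm{ord}(\chi)=s}\overline{\chi(g)}$.

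Finally, I would compute $c_s(g)$ by M\"obius inversion on the divisor poset. Characters of $\widehat G$ of order dividing $s$ are exactly those factoring through $G/sG$, and self-duality of finite abelian groups gives $|\widehat{G/sG}|=|G/sG|=|G[s]|$; so orthogonality makes $\sum_{\mathrm{ord}(\chi)\mid s}\overline{\chi(g)}$ equal to $|G[s]|$ when $g\in sG$ and $0$ otherwise. Combined with the elementary-divisor observation that $\{d\mid\exp(G):g\in dG\}=\{d:d\mid e(g)\}$, M\"obius inversion produces $c_s(g)=\sum_{d\mid\gcd(s,e(g))}\mu(s/d)\,|G[d]|$, and substituting back recovers the stated formula. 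The main obstacle is really just the double bookkeeping on the divisor lattice together with verifying the identity for $\{d:g\in dG\}$; once that is settled, the proof is essentially Kosters' calculation with $\binom{n/s}{i/s}$ and its accompanying sign replaced by the multiset coefficient $\binom{n/s+i/s-1}{i/s}$.
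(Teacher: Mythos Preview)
Your proposal is correct and follows essentially the same route as the paper: both start from the generating function $\prod_{h\in G}(1-xh)^{-1}$, project onto the $g$-component via character orthogonality (the paper phrases this as applying Kosters' Lemma~2.1 in the group ring $\mathbb{C}[G][[X]]$), evaluate the resulting product as $(1-x^s)^{-n/s}$ for $\chi$ of order $s$, and then group by order and apply the same M\"obius inversion to get $\sum_{d\mid\gcd(s,e(g))}\mu(s/d)\,\#G[d]$. The only difference is cosmetic: the paper simply cites Kosters for the character-sum step, while you spell out the identities $|\widehat{G/sG}|=|G[s]|$ and $\{d:g\in dG\}=\{d:d\mid e(g)\}$ explicitly.
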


\begin{proof}

Let $\mathbb{C}$ be the field of complex numbers and $\hat{G} =
Hom(G, \mathbb{C}^*)$ be the group of characters of $G$. Let $\chi
\in \hat{G}$ and $\bar{\chi}$ be the conjugate character. Then we
can extend $\chi$ to a $\mathbb{C}$-algebra morphism $\chi :
\mathbb{C}[G] \rightarrow \mathbb{C}$ on the group ring
$\mathbb{C}[G]$.

First we have

\[
\sum_{i=0}^{\infty} \sum_{g\in G} M(G, i, g)g X^i = \prod_{\sigma
\in G} \frac{1}{1-\sigma X} \in \mathbb{C}[G][X]
\]

Use Lemma 2.1 \cite{Kosters:13}, we write

\[
\sum_{i=0}^{\infty}  M(G, i, g) X^i = \frac{1}{n} \sum_{\chi \in
\hat{G}} \bar{\chi}(g)  \prod_{\sigma \in G} \frac{1}{1-
\chi(\sigma) X}.
\]

Following the same arguments as in the proof of Theorem 1.1 in
\cite{Kosters:13}, we obtain

\[
\sum_{i=0}^{\infty}  M(G, i, g) X^i = \frac{1}{n} \sum_{s \mid
exp(G)} \sum_{d \mid gcd(s, e(g))} \mu(s/d) \# G[d]
\frac{1}{(1-X^s)^{n/s}}.
\]

We single out $M(G, i, g)$ and get
\[
M(G, i, g) = \frac{1}{n} \sum_{s \mid exp(G)} \sum_{d \mid gcd(s,
e(g))} \mu(s/d) \# G[d] \binom{n/s+i/s-1}{i/s}.
\]

Hence
\[
M(G, i, g) = \frac{1}{n} \sum_{s \mid \gcd(exp(G), i)}
\binom{n/s+i/s-1}{i/s} \sum_{ d \mid gcd(s, e(g))} \mu(s/d) \# G[d].
\]
\end{proof}

\begin{remark} If $\gcd(i,exp(G))=1$ we have
$M(G,i,g)=\frac{1}{n}\binom{n+i-1}{i}$.
\end{remark}

\begin{remark} Let $p$ be prime, $G=\mathbb{Z}_{n}$, and $n=p^m$. Consider $g=kp^u < p^m$ where $p\nmid k$ and
$e(g)=p^u$. For $i=tp^w$ where $\gcd(p,t)=1$, we have
\begin{eqnarray*}
M(G,i,g)&=&\frac{1}{n}\left[\sum_{h=0}^{min(w,m)}\binom{p^{m-h}+tp^{w-h}-1}{tp^{w-h}}\sum_{c=0}^{\min\{h,u\}}\mu
(p^{h-c})p^c\right]\\
&=& \frac{1}{n}\left[\binom{p^{m}+tp^{w}-1}{tp^{w}}+\sum_{h=1}^{min\{w,u\}}\binom{p^{m-h}+tp^{w-h}-1}{tp^{w-h}}
(p^h-p^{h-1}) \right.\\
&& \left.-A\binom{p^{m-u-1}+tp^{w-u-1}-1}{tp^{w-u-1}}p^u\right],
\end{eqnarray*}
where $A=1$ if $u <  min(w, m)$ and zero otherwise. Here we used the fact that
$$\sum_{c=0}^{\min\{h,u\}}\mu
(p^{h-c})p^c=\begin{cases}1, &{\text if } \quad h=0,\\ p^h-p^{h-1}, & {\text if}\quad h\leq u,\\
-p^u,& {\text if}\quad h=u+1,\\ 0, & {\text if}\quad
h>u+1.\end{cases}$$
\end{remark}

Similarly,
\[
\sum_{i=0}^{\infty} \sum_{g\in G} M(G\setminus \{0\}, i, g)g X^i =
\prod_{\sigma \in G, \sigma \neq 0} \frac{1}{1-\sigma X} \in
\mathbb{C}[G][X].
\]
Because $\chi(0) =1$,  we have
\[
\sum_{i=0}^{\infty}  M(G \setminus \{0\}, i, g) X^i = \frac{1}{n}
\sum_{s \mid exp(G)} \sum_{d \mid gcd(s, e(g))} \mu(s/d) \# G[d]
\frac{1-X}{(1-X^s)^{n/s}}.
\]

Therefore,
\begin{eqnarray*}
M(G \setminus \{0\}, i, g) &=& \frac{1}{n} \sum_{s \mid \gcd(exp(G),i)} \binom{n/s+i/s-1}{i/s}
\sum_{d \mid gcd(s, e(g))} \mu(s/d) \# G[d] \\
&& - \frac{1}{n} \sum_{s \mid \gcd(exp(G),i-1)}
\binom{n/s+(i-1)/s-1}{(i-1)/s}  \sum_{d \mid gcd(s, e(g))} \mu(s/d)
\# G[d].
\end{eqnarray*}

We note $M(G \setminus \{0\}, i, g) = P_G(i, g)$.  Therefore we
obtain an explicit formula for the number of partitions of $g$ into
$i$ parts over $G$.  More generallly, let $D = G\setminus S$, where
$S=\{u_1,u_2,\ldots ,u_{|S|}\} \neq \emptyset$. Denote by
$M_S(G,i,g)$  the number of multisubsets of $G$ with sizes $i$ that
contains at least one elements from $S$. Then the number of
multisubsets of $D = G\setminus S$  with $i$ parts which sum up to
$g$  is equal to
$$M(G\setminus S,i,g)=M(G,i,g)-M_S(G,i,g).$$

Denote $M(G,0,0)=1$ and $M(G,t,s)=0$ for $s\neq 0$ and $t\leq 0$.
The principe of the inclusion-exclusion immediately implies that
$M_S(G, i,g)$ is given in the following formula. We note that the
formula is in particular useful when the size of $S$ is small.
\begin{Prop}\label{Inclusion-Exclusion} For all $i=1,2,\ldots $ and $g\in G$ we have
$$M_S(G,i,g)=\sum_{u\in S}M(G,i-1,g-u)- \ldots$$
$$+ (-1)^{t-1} \sum_{\{u_1,u_2,\ldots ,u_t\}\subseteq
S}M(G,i-t,g-(u_1+u_2+\ldots +u_t)) + \ldots$$
$$+ (-1)^{i-2} \sum_{\{u_1,u_2,\ldots ,u_{i-1}\}\subseteq
S}M(G,1,g-(u_1+u_2+\ldots +u_{i-1})) +$$
$$(-1)^{i-1} \sum_{\{u_1,u_2,\ldots ,u_{i}\}\subseteq
S}M(G,1,g-(u_1+u_2+\ldots +u_{i})).$$
\end{Prop}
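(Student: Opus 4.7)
The plan is a direct application of the inclusion-exclusion principle. For each $u\in S$, let
$$A_u = \Big\{\, M : M \text{ is a multisubset of } G,\ \#M=i,\ \sum_{s\in M} s = g,\ u \in M \text{ with multiplicity} \geq 1 \Big\}.$$
By the definition of $M_S(G,i,g)$ as the number of multisubsets of $G$ of size $i$ and sum $g$ that contain at least one element of $S$, we have $M_S(G,i,g) = \big|\bigcup_{u \in S} A_u\big|$, so inclusion-exclusion gives
$$M_S(G,i,g) \;=\; \sum_{\emptyset \neq T \subseteq S} (-1)^{|T|-1}\, \Big|\bigcap_{u\in T} A_u\Big|.$$

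The key combinatorial step is to evaluate $\big|\bigcap_{u\in T} A_u\big|$ for a fixed $T = \{u_1,\ldots,u_t\} \subseteq S$. A multiset lies in this intersection exactly when it has size $i$, sums to $g$, and contains each of the distinct elements $u_1,\ldots,u_t$ with multiplicity at least one. I would establish a bijection with the multisubsets counted by $M(G,i-t,g-(u_1+\cdots+u_t))$ by the natural \emph{remove-one-copy} map: given an element of $\bigcap_{u\in T} A_u$, delete exactly one copy of each $u_j$ to produce a multisubset of $G$ of size $i-t$ summing to $g - \sum_j u_j$; conversely, adjoining one copy of each $u_j$ to any such multisubset lands back in the intersection. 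Distinctness of the $u_j$'s (as members of the set $T$) is what makes the removal and re-insertion well-defined and mutually inverse. Substituting this identification into the inclusion-exclusion expansion and grouping by $t = |T|$ yields the displayed formula of the proposition.

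The only remaining care concerns the boundary terms. For $t > i$ the intersection is vacuously empty, and the conventions $M(G,0,0)=1$ and $M(G,t,s)=0$ for $t \leq 0$ and $s \neq 0$ are tailored to record the $t=i$ case correctly: the only potential element of $\bigcap_{u\in T} A_u$ is the multiset $\{\{u_1,\ldots,u_i\}\}$ itself, which contributes precisely when $u_1+\cdots+u_i = g$. Thus no substantive obstacle arises; the proof is essentially bookkeeping, and the only subtlety worth flagging in the write-up is that the removal bijection in the key step depends on the $u_j$'s being pairwise distinct, which is automatic because $T$ ranges over \emph{subsets} (not multisubsets) of $S$.
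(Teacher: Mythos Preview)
Your proposal is correct and follows essentially the same route as the paper: define $A_u$ as the multisubsets of size $i$ summing to $g$ that contain $u$, apply inclusion--exclusion to $\bigcup_{u\in S}A_u$, and identify each $t$-fold intersection with $M(G,i-t,g-(u_1+\cdots+u_t))$ via the remove-one-copy bijection. Your write-up is in fact slightly more careful than the paper's in flagging that the bijection relies on the $u_j$ being pairwise distinct and in spelling out the $t=i$ boundary case.
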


\begin{proof} Fix an element $g\in G$.
Denote by $\mathcal{A}_u$ the family of all the multisubsets of $G$
with $i$ parts which sum up to $g$ and each multisubset also
contains the element $u$. The principle of the inclusion-exclusion
implies that
\begin{equation}\label{IE}
|\cup_{u\in S} A_u|=\sum_{u\in
S}|\mathcal{A}_u|-\sum_{\{u_1,u_2\}\subseteq S
}|\mathcal{A}_{u_1}\cup\mathcal{A}_{u_2}|+\ldots 
+(-1)^{|S|-1}|\mathcal{A}_{u_1}\cap\ldots
\cap\mathcal{A}_{u_{|S|}}|.
\end{equation}

For each multisubset $\{\{ a_1,a_2,\ldots ,a_i\}\}\subseteq
\mathcal{A}_{u_1}\cap \ldots \mathcal{A}_{u_t}$ with $i$ parts which
sum up to $g$, we can assume that
 $a_1=u_1,\ldots,  a_t=u_t$. Then we obtain a multisubset $\{\{ a_{t+1},a_{t+2},\ldots
,a_i\}\} \subseteq G$ with $i-t$ parts which sum up to $g-(u_1+u_2+
\cdots +u_t)$.   Conversely, for each
 multiset $\{\{ a_{t+1},a_{t+2},\ldots,a_i\}\} \subseteq G$ with $i-t$ parts which sum up to $g-(u_1+u_2+\cdots +u_t)$,
we can obtain a multisubset $\{\{ a_{u_1},a_{u_2},\ldots, a_{u_t},
a_{t+1}, \ldots, ,a_i\}\}\subseteq \mathcal{A}_{u_1}\cap \ldots
\mathcal{A}_{u_t}$. Hence there is a bijective correspondence
between multisubsets in $\mathcal{A}_{u_1}\cap \ldots
\mathcal{A}_{u_t}$ and multisubsets of $G$  with $i-t$ parts which
sum up to
$g-(u_1+\cdots +u_t)$. There are $M(G,i-t,g-(u_1+\cdots +u_t))$ of them.  Obviously, $M(G,i-t,\star) = 0$ for $t > i$ and for $t \leq i$ we have 
$$\sum_{\{u_1,u_2,\ldots ,u_t\}\subseteq
S}|\mathcal{A}_{u_1}\cap\ldots
\cap\mathcal{A}_{u_t}|=\sum_{\{u_1,u_2,\ldots ,u_t\}\subseteq
S}M(G,i-t,g-(u_1+u_2+\ldots +u_t)). $$
Substituting this in the formula (\ref{IE}) gives the desired
result.
\end{proof}

For example, let $G= \mathbb{Z}_4$ and $S = \{0, 1 \}$. Then by
explicit counting, we can verify that $M_S(\mathbb{Z}_4\setminus
S,3,1)= M(\mathbb{Z}_4,3,1) -  M_S(\mathbb{Z}_4,3,1) =
M(\mathbb{Z}_4,3,1)
-\Big(M(\mathbb{Z}_4,2,1)+M(\mathbb{Z}_4,2,0)\Big)+M(\mathbb{Z}_4,1,0)=5-(2+3)+1=1$.

Similarly, let $S = \{0, 1, 2 \}$. We also have
$M_S(\mathbb{Z}_4\setminus S,3,1)= M(\mathbb{Z}_4,3,1) -
M_S(\mathbb{Z}_4,3,1) =
M(\mathbb{Z}_4,3,1)-\Big(M(\mathbb{Z}_4,2,1)+M(\mathbb{Z}_4,2,0)+M(\mathbb{Z}_4,2,3)\Big)+$$
$$\Big(M(\mathbb{Z}_4,1,0)+M(\mathbb{Z}_4,1,3)+M(\mathbb{Z}_4,1,2)\Big)-M(\mathbb{Z}_4,0,2)=
5-(2+3+2)+(1+1+1)-0=1$.

Another example is for group $G=\mathbb{Z}_u\times \mathbb{Z}_v$ and
$G\setminus S=\mathbb{Z}_u\times \{0\}$, we have that $M(G\setminus
S, i, (a,b))=0$ for all $b\in \mathbb{Z}_v\setminus\{0\}$, but
$M(G\setminus S,i,(a,0))\neq 0$.

\begin{remark}
We note that the same formula does not work for subset sum problem.
For example, $N(\mathbb{Z}_4,2,0)=1, \quad N(\mathbb{Z}_4,1,0)=1$,
but $N(\mathbb{Z}_4\setminus\{0\},2,0)=1\neq
N(\mathbb{Z}_4,2,0)-N(\mathbb{Z}_4,1,0)=1-1=0$. A similar formula
that holds for subset sum problem is
$$N(G\setminus S,i,g)=\sum_{u\in S}N(G\setminus \{u\},i-1,g-u)- \ldots$$
$$+ (-1)^{t-1} \sum_{\{u_1,u_2,\ldots ,u_t\}\subseteq
S}N(G\setminus \{u_1,u_2,\ldots ,u_t\},i-t,g-(u_1+u_2+\ldots +u_t))
+ \ldots$$
$$+ (-1)^{i-2} \sum_{\{u_1,u_2,\ldots ,u_{i-1}\}\subseteq
S}N(G\setminus \{u_1,u_2,\ldots ,u_{i-1}\},1,g-(u_1+u_2+\ldots
+u_{i-1}))+
$$
$$(-1)^{i-1} \sum_{\{u_1,u_2,\ldots ,u_{i}\}\subseteq
S}N(G\setminus \{u_1,u_2,\ldots ,u_{i}\},0,g-(u_1+u_2+\ldots
+u_{i})),$$ where we again use notation $N(D,0,0)=1$ and
$N(D,0,z)=0$ for $z\neq 0$.
\end{remark}

\end{document}